\newtheorem{lem}{Lemma}%
    \newtheorem{prop}[lem]{Proposition}
    \newtheorem*{thm}{Theorem}
    \newtheorem*{cor}{Corollary}
   \theoremstyle{definition}
    \newtheorem{dfn}[lem]{Definition}%
\newtheorem{ntn}[lem]{Notation} 
\theoremstyle{remark}
    \newtheorem{rem}[lem]{Remark}
\def\bb{\mathbf}
\def\phi{\varphi}
\def\calX{\mathcal{X}}
\def\balX{\bar\calX}
\newcommand{\cusub}[2]{{#1}_{\{{#2}\}}}
\newcommand{\cc}[2]{\binom{{#2}+{#1}}{#2}}
\newcommand{\ccc}[2]{c_{{#1},{#2}}}
\newcommand{\Pp}[1]{\bb{P}^{#1}}
\newcommand{\GL}{\mathrm{GL}}
\def\Z{\bb{Z}}
\def\Q{\bb{Q}}
\def\C{\bb{C}}
\def\pu{\bullet}
\newcommand{\s}{\mathfrak S}
\newcommand{\coh}[3][\Q]{H^{#2}({#3};{#1})}
\newcommand{\BM}[3][\Q]{\bar{H}_{#2}({#3};{#1})}
\def\ba{\big|}
\newcommand{\bax}{\ba\calX\ba_\rho}
\DeclareMathOperator{\Sing}{Sing}
\DeclareMathOperator{\reg}{reg}
\newcommand{\B}[2]{B({#2},{#1})}
\newcommand{\F}[2]{F({#2},{#1})}
\author{Orsola Tommasi}
\address{Institut f\"ur Algebraische Geometrie\\
Leibniz Universit\"at Hannover\\
Wel\-fen\-gar\-ten~1\\
D--30167 Hannover\\
Germany}
\email{tommasi@math.uni-hannover.de}
\thanks{Supported by  DFG grant Hu-337/6-2}
\title{Stable cohomology of spaces of non-singular hypersurfaces}
\begin{document}

\begin{abstract}
We prove that the rational cohomology of the space $X_{d,N}$ of non-singular complex  homogeneous polynomials of degree $d$ in $N$ variables stabilizes to the cohomology of $\GL_{N}(\C)$ for $d$ sufficiently large. 
\end{abstract}
\maketitle

\section{Introduction}

Let us fix variables $x_1,\dots,x_N$ and denote by $X_d$ the space of non-singular homogeneous polynomials of degree $d$ in $x_1,\dots,x_N$ with complex coefficients.
A recent result of Vakil and Wood \cite[Thm. 1.13]{vakilwood} about stabilization in the Grothendieck ring suggests that the rational cohomology of $X_d$ stabilizes for $d\gg 0$, in the sense that its $k$th cohomology group  is independent of $d$ for $d$ sufficiently large with respect to $k$.
In this note, we prove that this is indeed the case for $k<\frac{d+1}2$ and describe explicitly the stable cohomology of $X_d$, by proving it is isomorphic to the cohomology of the general linear group.

Let us remark that Peters and Steenbrink proved in \cite{ps-leray} that the rational cohomology of $X_d$ contains a copy of the cohomology of $\GL_N(\C)$ for $d\geq3$. 
Thus the same property should hold for stable cohomology, provided it exists.
More precisely, Peters and Steenbrink showed in \cite{ps-leray} that the cohomology of $X_d$ is isomorphic to the tensor product of the cohomology of $\GL_N(\C)$ and  that of the moduli space $M_d:=X_d/\GL_N(\C)$ of smooth degree $d$ hypersurfaces in $\Pp{N-1}$.
Hence, in view of the results of \cite{ps-leray}, one can say that the stable cohomology of $X_d$ is the minimal possible and that it coincides with the subalgebra generated by the classes described in \cite[\S5--6]{ps-leray}. 
Furthermore, our result implies that the cohomology of the moduli space $M_d$ vanishes in (low) degree $k>0$ if $d$ is sufficiently large.

Our approach to stable cohomology is based on Vassiliev's method \cite{Vbook} for computing the cohomology of complements of discriminants, i.e. of the locus of non-singular elements inside a vector space of functions.
A main feature of this method is the possibility of computing the cohomology of the complement of a discriminant from the description of the possible singular loci of the elements of the discriminant. In this way one obtains a relationship between the topology of the complement of the discriminant and the geometry of the spaces of singular configurations.
We will show that, in the case of $X_d$, this kind of approach yields that stable cohomology is  determined by the geometry of spaces of configurations of up to $N$ points in $\Pp{N-1}$.

Vassiliev's method was adapted to the algebro-geometric setting 
first by Vassiliev in \cite{Vart} and subsequently by Gorinov \cite{gorinov} and by the author \cite{OTM4}.
All these approaches are equivalent for the purpose of this note, which only requires to deal with the first steps of the method. However, for the sake of completeness, we include a direct construction of the part of Vassiliev's spectral sequence we need to prove our result. This construction is strongly based on the construction of cubical hyperresolutions of singular spaces in \cite{hyperres}, see also \cite[\S5]{ps-book}.

It is interesting to remark that in almost all cases in which the rational cohomology of $X_d$ is known, it coincides with the stable cohomology. 
For instance, it is well known that the cohomology of the space $X_d$ for $N=2$ and $d\geq 4$ coincides with the cohomology of $\GL(2)$. A short proof based on Vassiliev's method can be found in \cite[Lemma 5.1]{OTM4}. 
For $N=3$, i.e. for polynomials defining non-singular plane curves, the rational cohomology was computed for degree $3,4$ and $5$ in \cite[Thm. 1 and 2]{Vart} and \cite{gorinov}, respectively. Moreover, the cohomology for $N=4$ and degree $3$, i.e. for cubic surfaces, is described in \cite[Thm. 4]{Vart}. Among these examples, the only case in which non-stable cohomology occurs is for plane quartics.

Finally, let us remark that the results of \cite{vakilwood} in the Grothendieck ring hold for a much larger class of spaces than the spaces $X_d$. Specifically, one would expect cohomological stability phenomena in much greater generality,
for the space of divisors with a prescribed number of singular points in the linear system $|L^{\otimes d}|$ for $L$ a very ample line bundle on an arbitrary smooth manifold $X$.
We plan to consider this more general situation in a subsequent paper.

\subsection*{Notation}
Throughout this note we will make an extensive use of Borel--Moore homology, i.e. homology with closed support, which we denote by the symbol $\bar H_\pu$. For its definition and the properties we refer to \cite[Chapter 19]{intersectiontheory}.

In our results, we take  into account mixed Hodge structures on homology and cohomology groups. The Hodge structures that arise in the stable cohomology of $X_{d}$ are always pure and of Tate type. We will use the notation
$\Q(-k)$ for the Tate Hodge-structure of weight $2k$ (i.e. of Hodge type $(k,k)$).

\subsection*{Acknowledgements}
I would like to thank Ravi Vakil and Melanie Matchett Wood for helpful discussions on this project. Furthermore, I am indebted to the referee for many useful suggestions on how to improve the paper. Finally, I would like to thank Remke Kloosterman for help with the proof of Lemma~\ref{vectorbundle}.

\section{The result}
For a fixed $n\geq 1$, let us denote by $V_{d,n}=\C[x_0,\dots,x_n]_d$ the vector space of homogeneous polynomials of degree $d$. 
Let us recall that a polynomial $f\in V_{d,n}$ is \emph{singular} if there is a non-zero vector $y=(y_0,\dots,y_n)\in\C^n\setminus(0,\dots,0)$ such that all partial derivatives of $f$ vanish at $y$, i.e. if we have 
$$\frac {\partial f}{\partial x_0}(y)=\dots=\frac{\partial f}{\partial x_n}(y)=0.$$
This condition can be interpreted geometrically as follows. If $f\neq0$ holds, then the vanishing of $f$ defines a hypersurface in projective space $\Pp n$. The condition above means that the point $[y]\in\Pp n$ belongs to the singular locus of the hypersurface, i.e. to the closed subset of points at which the hypersurface is not a smooth complex manifold.

The  locus of singular polynomials inside 
$V_{d,n}$ is called the \emph{discriminant hypersurface} $\Sigma=\Sigma_{d,n}$. We denote its complement by $X_{d,n}=V_{d,n}\setminus\Sigma_{d,n}$. 

The aim of this note is to prove the following result:
\begin{thm}
The cohomology with rational coefficients of $X_{d,n}$ and the rational cohomology of $\GL_{n+1}(\C)$ (considered as a topological space) are isomorphic in degree $k< \frac {d+1}2$.
\end{thm}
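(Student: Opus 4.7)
The plan is to apply Vassiliev's method for the cohomology of discriminant complements, as summarized in the introduction. Setting $D = \dim_\C V_{d,n} = \binom{d+n}{n}$, Alexander duality in the contractible ambient space gives
\[
H^k(X_{d,n}; \Q) \cong \bar H_{2D-k-1}(\Sigma_{d,n}; \Q)(-D),
\]
reducing the theorem to a computation of the Borel--Moore homology of the discriminant near the top dimension. I would then build an augmented geometric resolution $\calX \to \Sigma_{d,n}$ whose points are pairs $(f,S)$ with $S$ a finite nonempty subset of $\Sing(f)$, endowed with the natural simplicial structure on subsets of $\Sing(f)$, and inducing an isomorphism on Borel--Moore homology. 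The filtration by $|S| \leq k$ produces a spectral sequence whose $E^1$-term records the Borel--Moore homology of each stratum $\calX_k \setminus \calX_{k-1}$. Each such stratum fibers over the unordered configuration space $\B{k}{\Pp n} = \F{k}{\Pp n}/\s_k$, with fiber the product of an open $(k-1)$-simplex (carrying the sign representation of $\s_k$) and the linear subspace $L(p_1,\ldots,p_k) \subset V_{d,n}$ of polynomials singular at the chosen points.

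The technical heart of the argument is a linear independence lemma: for $2k \leq d$ and any $k$ distinct points in $\Pp n$, the $k(n+1)$ linear conditions $\partial_i f(p_j) = 0$ are independent on $V_{d,n}$. I would prove this by showing that the evaluation map $V_{d,n} \to \C^{k(n+1)}$, $f \mapsto (\partial_i f(p_j))_{i,j}$, is surjective: for each pair $(i_0,j_0)$, produce an explicit $f = g\cdot h$ with $\partial_{i_0} f(p_{j_0}) = 1$ and $\nabla f(p_j) = 0$ for $j \neq j_0$. Take $g = \prod_{j \neq j_0}\ell_j^2$, where each $\ell_j$ is a linear form vanishing at $p_j$ but not at $p_{j_0}$; since $g$ vanishes to order $\geq 2$ at every $p_j$ with $j \neq j_0$, the gradient of $gh$ automatically vanishes at those points for any $h$. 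A short calculation using the matrix determinant lemma and Euler's identity then shows that, as $h$ ranges over $V_{d-2(k-1),n}$, the vector $\nabla(gh)(p_{j_0})$ sweeps out all of $\C^{n+1}$ as soon as $\deg h \geq 1$, i.e.\ $d \geq 2k-1$. Hence $L(p_1,\ldots,p_k)$ has codimension exactly $k(n+1)$ in $V_{d,n}$.

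With the independence in hand, $\calX_k \setminus \calX_{k-1}$ has real dimension $2(D-k) + (k-1) = 2D-k-1$, so its Borel--Moore homology is concentrated in degrees $\leq 2D-k-1$. Via Alexander duality, the contribution of this stratum to $H^j(X_{d,n})$ is supported in cohomological degrees $j \geq k$. For $j$ in the stable range $j < (d+1)/2$, only strata with $k \leq j < (d+1)/2$ can contribute, and every such $k$ satisfies $2k \leq d$, so the independence lemma applies throughout. The $E^1$-term of the spectral sequence in this range therefore does not depend on $d$, and $H^j(X_{d,n})$ for $j < (d+1)/2$ is determined by a stable quantity depending only on $n$.

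To identify this stable quantity with $H^*(\GL_{n+1}(\C); \Q)$, I would invoke the Peters--Steenbrink splitting from \cite{ps-leray}, which already realises $H^*(\GL_{n+1})$ as a direct summand of $H^*(X_{d,n})$ for $d \geq 3$. A direct computation of the individual stratum contributions, keeping track of Tate twists and of the sign isotypic components coming from the simplex, should then match $H^*(\GL_{n+1})$ exactly in each degree $< (d+1)/2$, forcing the equality. The main obstacle is the linear independence lemma, which pins down the precise stability threshold $k < (d+1)/2$; a secondary difficulty is the combinatorial bookkeeping required to match the Vassiliev output with $H^*(\GL_{n+1})$ degree by degree, for which the Peters--Steenbrink realisation is essential.
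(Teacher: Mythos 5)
Your overall architecture matches the paper's: Alexander duality to Borel--Moore homology of the discriminant, a simplicial/cubical resolution stratified by the number of imposed singular points, a spectral sequence with $E^1$ built from the strata, and the Peters--Steenbrink surjection $H^\pu(X_{d,n})\twoheadrightarrow H^\pu(\GL_{n+1}(\C))$ to close the argument. Your proposed proof of the independence lemma (the analogue of the paper's Lemma~\ref{vectorbundle}) is genuinely different: the paper deduces the codimension via Castelnuovo--Mumford regularity of $I^2$ for an ideal of $N$ points (citing Geramita--Gimigliano--Pitteloud and Eisenbud), whereas you propose an explicit $f = g\cdot h$ with $g$ a product of squares of linear forms, then check surjectivity of $h\mapsto\nabla(gh)(p_{j_0})$ via the matrix determinant lemma and Euler's identity. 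That route is more elementary and does appear to work; it yields the same threshold $d\geq 2k-1$.

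However, there is a real gap in the second half. Your argument establishes only that the stratum $\calX_k\setminus\calX_{k-1}$ has real dimension $2D-k-1$ and hence contributes to $H^j(X_{d,n})$ only for $j\geq k$. That bounds which strata \emph{can} contribute in a given cohomological degree, and it gives $d$-independence of the relevant portion of the $E^1$ page. But the actual isomorphism with $H^\pu(\GL_{n+1}(\C))$ needs a much stronger vanishing: the strata indexed by $l$ with $n+1 < l < N$ must have \emph{zero} Borel--Moore homology, not merely Borel--Moore homology concentrated in low degree. This is the content of Vassiliev's computation (Proposition~\ref{grass} in the paper, quoted from \cite[Lemma~2]{Vart}), which identifies the twisted Borel--Moore homology $\BM[\pm\Q]{\pu}{\B{l}{\Pp n}}$ with the homology of the Grassmannian $G(l,\C^{n+1})$, a space that is empty for $l>n+1$. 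Without that ingredient, your $E^1$ page has many potentially nonzero terms in degrees $k\leq(n+1)^2$ coming from strata $F_l$ with $n+1<l\leq k$, and ``a direct computation of the individual stratum contributions'' cannot possibly yield $H^\pu(\GL_{n+1}(\C))$: for $n=1$, for instance, the strata $F_3$ and $F_4$ a priori contribute to $H^3(X)$ and $H^4(X)$, and only the Grassmannian vanishing removes them. Two further missing pieces: first, the resolution you sketch (pairs $(f,S)$ with $S\subset\Sing(f)$ finite) needs care when $\Sing(f)$ is positive-dimensional; the paper addresses this by truncating at $N$ points, building a partial compactification $\balX$ to get a workable topology, and isolating the open stratum $F_N$ in a separate estimate (Lemma~\ref{laststratum}). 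Second, the paper deliberately does \emph{not} match $E^1$ to $H^\pu(\GL_{n+1}(\C))$ degree by degree (which would require controlling differentials); it instead proves only a vanishing statement $H^k(X_{d,n})=0$ for $(n+1)^2<k<N$ and combines it with the Peters--Steenbrink surjection. Your closing paragraph asks for more than the paper supplies and gives no route to get it without Proposition~\ref{grass} and a degeneration argument.
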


By work of Peters and Steenbrink, this isomorphism between the cohomology of $X_{d,n}$ and $\GL_{n+1}(\C)$ in low degree is induced by the orbit map associated with the action of the general linear group on $X_{d,n}$. Specifically, their result is the following.

\begin{prop}[{\cite[Lemma~7]{ps-leray}}]\label{prop}
If $d\geq 3$ and $n\geq 1$, then
for every polynomial $f(x_0,\dots,x_n)\in X_{d,n}$, the orbit map 
$$
\begin{array}{rccc}
r:&\GL_{n+1}(\C)&\longrightarrow &X_{d,n}\\
 &g&\longmapsto&f(g(x_0),\dots,g(x_n))
\end{array}
$$
given by the natural action of $\GL_{n+1}(\C)$  on $\C^{n+1}=\operatorname{span}(x_0,\dots,x_n)$
induces a surjection $\coh\pu{X_{d,n}}\rightarrow\coh\pu{\GL_{n+1}(\C)}$ in cohomology.
\end{prop}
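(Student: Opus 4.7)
The plan is to identify $r^{*}$ with the fiber-edge homomorphism of a Leray--Serre spectral sequence. Set $G=\GL_{n+1}(\C)$. Because $d\ge 3$, the Matsumura--Monsky theorem gives that the stabilizer $\mathrm{Aut}(f)\subset G$ of every smooth polynomial $f$ is finite, and smooth hypersurfaces are GIT-stable; hence $r$ factors as a composition $G\twoheadrightarrow G/\mathrm{Aut}(f)=\mathcal O(f)\hookrightarrow X_{d,n}$ of a finite étale cover with a closed inclusion. Since $G$ is connected, right translation by elements of the finite group $\mathrm{Aut}(f)$ is homotopic to the identity, so $\mathrm{Aut}(f)$ acts trivially on $H^{*}(G;\Q)$ and the cover induces an isomorphism $H^{*}(\mathcal O(f);\Q)\cong H^{*}(G;\Q)$. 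The claim thus reduces to showing that the closed inclusion $\iota\colon\mathcal O(f)\hookrightarrow X_{d,n}$ induces a surjection on rational cohomology.

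Next, I would view the quotient $\pi\colon X_{d,n}\to M_{d}:=X_{d,n}/G$ as a rational principal $G$-bundle (with $\mathcal O(f)$ as a typical fiber), which is legitimate since $G$ is connected and acts with finite stabilizers. The map $\iota^{*}$ coincides with the fiber-direction edge homomorphism of the Leray--Serre spectral sequence
$$E_{2}^{p,q}=H^{p}(M_{d};\Q)\otimes H^{q}(G;\Q)\ \Longrightarrow\ H^{p+q}(X_{d,n};\Q),$$
so $\iota^{*}$ is surjective iff the sequence degenerates at $E_{2}$. Since $H^{*}(G;\Q)=\Lambda(\xi_{1},\xi_{3},\ldots,\xi_{2n+1})$ is a free exterior algebra on transgressive primitive generators, degeneration is equivalent to the vanishing of the transgressions $\tau(\xi_{2i-1})\in H^{2i}(M_{d};\Q)$, which coincide up to sign with the Chern classes $c_{i}(\mathcal E)$ of the bundle $\mathcal E=X_{d,n}\times_{G}\C^{n+1}$ associated with the standard representation.

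The main obstacle is therefore to show that $c_{i}(\mathcal E)=0$ in $H^{2i}(M_{d};\Q)$ for all $i=1,\ldots,n+1$. For this I would exploit the $G$-equivariant inclusion $X_{d,n}\subset V_{d,n}$ into the contractible $G$-representation $V_{d,n}$. Passing to Borel constructions, one obtains an open inclusion $M_{d}\simeq X_{d,n}\times_{G}EG\hookrightarrow V_{d,n}\times_{G}EG\simeq BG$, through which the classifying map of $\mathcal E$ factors. Consequently each $c_{i}(\mathcal E)$ is the restriction of the universal class $c_{i}\in H^{2i}(BG)$, and by the long exact sequence of the pair $(V_{d,n}\times_{G}EG,\,X_{d,n}\times_{G}EG)$ this restriction vanishes iff $c_{i}$ lies in the image of the equivariant Gysin pushforward from the homotopy discriminant $\Sigma_{d,n}\times_{G}EG$ into $H^{*}(BG)$. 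I would verify this by stratifying $\Sigma_{d,n}$ by singularity type: the main stratum consists of polynomials with a single ordinary node, which maps to $\Pp n$ via the ``take the singular point'' map, realising this stratum as an affine bundle over $\Pp n$. Its equivariant cohomology therefore contains $H^{*}_{G}(\Pp n)\cong H^{*}(BG)[h]/\bigl(\sum_{i=0}^{n+1}(-1)^{i}c_{i}\,h^{n+1-i}\bigr)$, and integration over the fibre $\Pp n$ converts the powers $h^{k}$ into polynomial combinations of $c_{1},\ldots,c_{n+1}$ from which, by a triangular change of basis in $H^{*}(BG)$, every individual Chern class is recovered. Setting up this Gysin argument rigorously despite the finite stabilisers and the singular strata, and performing the degree bookkeeping for all $n+1$ Chern classes simultaneously, is the most delicate part of the argument.
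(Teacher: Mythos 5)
The paper does not actually prove this proposition: it is cited verbatim from Peters--Steenbrink \cite[Lemma~7]{ps-leray}, where the proof proceeds by \emph{constructing explicit geometric cohomology classes} $\eta_0,\dots,\eta_n\in H^{2k+1}(X_{d,n})$ and checking directly that they pull back to the primitive generators of $H^*(\GL_{n+1}(\C);\Q)$; the degeneration at $E_2$ of the Leray spectral sequence for $X_{d,n}\to M_{d,n}$ is then a \emph{consequence} of the surjectivity, not an ingredient of it. Your proposal inverts this logical order: you want to establish degeneration first (by killing the Chern classes $c_i(\mathcal E)$) and read off surjectivity from the edge map. That is not circular, but it does mean you are in effect reproving the harder half of the Peters--Steenbrink theorem by a new route, and you quietly use part of their conclusion along the way (namely that $R^q\pi_*\Q$ is the \emph{constant} local system $H^q(G;\Q)$; for a proper action of a connected group with finite stabilizers this is fine but should be stated, not assumed).

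The genuine gap is in the Gysin step. You try to push forward from the open stratum of nodal polynomials, ``realising this stratum as an affine bundle over $\Pp n$.'' Two problems: first, that stratum is only a Zariski-open subset of an affine bundle, not the bundle itself; second, and more seriously, an open stratum of $\Sigma_{d,n}$ does \emph{not} map properly to $V_{d,n}$, so there is no Gysin pushforward on (equivariant) Borel--Moore homology from it. What you should use instead is the resolution $\tilde\Sigma=\{(f,p)\in V_{d,n}\times\Pp n\mid p\in\Sing f\}$, which \emph{is} the full rank-$(\ccc dn-(n+1))$ vector bundle over $\Pp n$ and maps properly onto $\Sigma_{d,n}\subset V_{d,n}$ --- this is exactly the space $\cusub{\calX}{1}$ of the present paper. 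Factoring $\tilde\Sigma\hookrightarrow\Pp n\times V_{d,n}\to V_{d,n}$, the pushforward becomes $\alpha\mapsto\int_{\Pp n}\alpha\cdot e$, where $e$ is the equivariant Euler class of the normal bundle, which is $(\C^{n+1})^\vee\otimes\mathcal O_{\Pp n}(d-1)$. Expanding $e=\prod_{j}\left((d-1)h\mp\tau_j\right)$ and integrating $h^{i-1}e$ over $\Pp n$ produces $\pm(d-1)^{\,n+1-i}\,c_i$ plus terms in $c_1,\dots,c_{i-1}$, so by upper triangularity the image of the pushforward contains every $c_i$ provided $d\ge 2$ (the hypothesis $d\ge3$ is still needed elsewhere, for finiteness of stabilizers and closedness of orbits). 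If you rewrite your last paragraph along these lines, replacing the open nodal stratum by the proper resolution $\tilde\Sigma$ and making the Euler-class computation explicit, the argument closes; as it stands, the pushforward from the open stratum is undefined.
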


In particular, to prove the above theorem it is enough to show that the orbit map is an isomorphism in the stable range $k <\frac {d+1}2$.
Furthermore, Peters and Steenbrink use Proposition~\ref{prop} to prove a stronger result, namely, the degeneration at $E_2$ of the  Leray spectral sequence in rational cohomology associated with the quotient map $X_{d,n}\rightarrow M_{d,n}:=X_{d,n}/\GL_{n+1}(\C)$. In particular, there is an isomorphism
\begin{equation}\label{ps-iso}
\coh\pu{X_{d,n}}\cong \coh\pu{M_{d,n}}\otimes \coh\pu{\GL_{n+1}(\C)}
\end{equation}
of graded $\Q$-vector spaces with mixed Hodge structures.
Note that the quotient space $M_{d,n}=X_{d,n}/\GL_{n+1}(\C)$ is the moduli space of non-singular degree $d$ hypersurfaces in $\Pp n$.

If we combine the theorem above with Peters--Steenbrink's isomorphism \eqref{ps-iso}, we get
\begin{cor}
The rational cohomology of the moduli space $M_{d,n}$ of non-singular degree $d$ hypersurfaces in $\Pp n$ vanishes in degree $k$ for $0<k< \frac {d+1}2$ and $d\geq 3$.
\end{cor}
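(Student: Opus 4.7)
The plan is simply to combine the theorem (which gives $H^k(X_{d,n};\Q)\cong H^k(\GL_{n+1}(\C);\Q)$ for $k<\tfrac{d+1}{2}$) with the Peters--Steenbrink decomposition \eqref{ps-iso}, and do bookkeeping in each fixed degree. The mechanism is that \eqref{ps-iso} is an isomorphism of \emph{graded} vector spaces, so in a given total degree $k$ we have
\[
\dim H^k(X_{d,n};\Q)=\sum_{i+j=k}\dim H^i(M_{d,n};\Q)\cdot\dim H^j(\GL_{n+1}(\C);\Q),
\]
and the theorem forces this sum to collapse to the single term coming from $H^0(M_{d,n};\Q)$.

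More concretely, first I would note that $M_{d,n}$ is connected for $d\geq 3$ (the locus $X_{d,n}$ of non-singular polynomials is irreducible, and its image under the quotient by $\GL_{n+1}(\C)$ is therefore connected), so $H^0(M_{d,n};\Q)=\Q$. This identifies one summand on the right-hand side above with $H^k(\GL_{n+1}(\C);\Q)$. Then, applying the theorem in the stable range $k<\tfrac{d+1}{2}$, the left-hand side equals $\dim H^k(\GL_{n+1}(\C);\Q)$ as well, so the remaining summands must sum to zero:
\[
\sum_{i=1}^{k}\dim H^i(M_{d,n};\Q)\cdot\dim H^{k-i}(\GL_{n+1}(\C);\Q)=0.
\]
Since each term is a non-negative integer, each must vanish individually. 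The $i=k$ term, which involves the factor $\dim H^0(\GL_{n+1}(\C);\Q)=1$, then forces $H^k(M_{d,n};\Q)=0$ for every $k$ with $0<k<\tfrac{d+1}{2}$.

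There is essentially no obstacle to this deduction beyond assembling the two ingredients, so no induction or further geometric input is required. The one point worth being explicit about is why the Künneth-style splitting \eqref{ps-iso} allows the extraction of each individual $H^i(M_{d,n};\Q)$ as a direct summand: this is built into the statement that it is an isomorphism of graded $\Q$-vector spaces, so dimensions in each bidegree match on the nose. If one wishes, the argument can be refined to a statement of mixed Hodge structures since \eqref{ps-iso} respects them, but for the vanishing in the corollary only the underlying vector space dimensions are needed.
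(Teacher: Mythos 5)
Your proposal is correct and is essentially the paper's own (implicit) deduction: the corollary is stated as an immediate consequence of the main theorem together with the Peters--Steenbrink isomorphism \eqref{ps-iso}, and your degree-by-degree dimension count, using connectedness of $M_{d,n}$ and non-negativity of the K\"unneth summands, is exactly the bookkeeping the paper leaves to the reader.
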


Before we proceed to outline the structure of the proof of the main theorem, let us discuss some natural questions related to it. 
First, the theorem implies the existence of isomorphisms $\coh k{X_{d,n}}\cong\coh k{X_{d',n}}$ for all $d'>d>2k-1$. However, the proof of the theorem we will give in the next sections does not explain how to construct such stability isomorphisms in a natural way. The reason behind that is that we will only investigate the range in which the $E^1$ terms in Vassiliev's spectral sequence vanish. Although we will not discuss this further, it is indeed possible to construct isomorphisms between non-zero terms of the Vassiliev's spectral sequence and this should expectedly give more information about the existence of stability or transfer maps between the $\coh k{X_{d,n}}$ for different values of $d$ and fixed $k$ in the stable range.

Secondly, although all our results are for cohomology with rational coefficients, we would like to remark that Vassiliev's method can also be applied to compute cohomology with integral coefficients. The vanishing result (Lemma~\ref{main}) on which the proof of the main theorem is based is not expected to hold if one replaces $\Q$-coefficients with $\Z$-coefficients, but it is conceivable that 
one can still detect stability phenomena using Alexander duality \eqref{alex} and the Vassiliev's spectral sequence with integral coefficients. 

Finally, the bound $k<\frac {d+1}2$ for the stability range is not optimal. We will discuss how to improve it in Remark~\ref{betterbounds}.

\medskip

By Proposition~\ref{prop}, to prove the theorem it suffices to prove that the cohomology of $X_{d,n}$ in the degree range $k< \frac {d+1}2$ is not larger than one copy of $\coh\pu{\GL_{n+1}(\C)}$.
The cohomology of $\GL_{n+1}(\C)$ is well known. By \cite[Prop.~7.3]{borel}, it is an exterior algebra generated in odd degree. As shown in \cite[\S5]{ps-leray}, there are exactly $n+1$ generators $\eta_k$ ($k=0,\dots,n$) of degree $2k+1$ and Hodge type $(k+1,k+1)$, with a very precise geometrical description.  
In particular, the cohomology of $\GL_{n+1}(\C)$ vanishes in degree larger than $(n+1)^2$ and is generated by the product $\eta_0\eta_1\dots\eta_{n}$ %
 in degree $(n+1)^2$. To prove the theorem, it suffices to prove that
the cohomology of $X_{d,n}$ in the stable range $k< \frac{d+1}2$ vanishes in degree larger than $(n+1)^2$.

\medskip
By definition, requiring a polynomial $f$ to be singular at a given point $p\in\Pp n$ imposes $n+1$ conditions. Therefore, if we choose $N$ points in $\Pp n$ and require $f$ to be singular at all of them, the naive expectation is that this will impose $N(n+1)$ conditions on $f$. Indeed, this is always the case if $N$ is sufficiently small with respect to the degree of $f$.  

\begin{lem}\label{vectorbundle}
For a fixed integer $N\geq 1$, 
the restriction of 
\begin{equation}\label{vbundle}
\left\{(f,p_1,\dots p_N)\in V_{d,n}\times (\Pp n)^N|\;
p_1,\dots,p_N\in\Sing(f)\right\}\xrightarrow{\pi} (\Pp n)^N
\end{equation}
to the locus where all $p_i$ are distinct is a vector bundle of rank $\cc{d}{n}-N(n+1)$ if and only if $d\geq 2N-1$ holds.
\end{lem}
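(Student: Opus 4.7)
My plan is to reduce the statement to the surjectivity of a first-jet evaluation map, establish sufficiency by an explicit Lagrange-type construction with products of squares of separating linear forms, and prove the failure for small $d$ by reduction to the one-variable case on a line. Since singularity of $f\in V_{d,n}$ at $p\in\Pp n$ means $f\in\mathfrak m_p^2\subset\calO_{\Pp n,p}$, the fiber of $\pi$ over a distinct tuple $(p_1,\dots,p_N)$ is the kernel of the $1$-jet evaluation
$$\mathrm{ev}:V_{d,n}\longrightarrow\bigoplus_{i=1}^N\calO_{\Pp n,p_i}/\mathfrak m_{p_i}^2.$$
The target is the fiber of a rank-$N(n+1)$ vector bundle on the base of distinct tuples (the sum of the pullbacks of the first jet bundle of $\calO(d)$ via the $N$ coordinate projections), the source is the trivial bundle of rank $\cc{d}{n}$, and the kernel of a surjective morphism of vector bundles is automatically a vector bundle. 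Hence $\pi$ is a vector bundle of rank $\cc{d}{n}-N(n+1)$ if and only if $\mathrm{ev}$ is surjective on every distinct tuple.

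For the sufficiency direction $d\geq 2N-1$, for each $i$ I would pick a linear form $L_i\in V_{1,n}$ with $L_i(p_i)=0$ and $L_i(p_j)\neq 0$ for all $j\neq i$; such an $L_i$ exists because the subspace of linear forms vanishing at $p_i$ has dimension $n\geq 1$, and inside it the further conditions $L_i(p_j)=0$ carve out finitely many proper hyperplanes. Set $M_i:=\prod_{j\neq i}L_j^2\in V_{2(N-1),n}$, so that $M_i$ is a local unit at $p_i$ and vanishes to order at least $2$ at every other $p_j$. For arbitrary $A_1,\dots,A_N\in V_{d-2(N-1),n}$ form $f:=\sum_i A_iM_i\in V_{d,n}$; the $1$-jet of $f$ at $p_k$ equals that of $A_kM_k$, because the other summands vanish to order $\geq 2$ at $p_k$, and since $M_k(p_k)\neq 0$ this $1$-jet ranges freely over $\calO_{\Pp n,p_k}/\mathfrak m_{p_k}^2$ as the $1$-jet of $A_k$ does. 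The hypothesis $d\geq 2N-1$ guarantees $d-2(N-1)\geq 1$, so that $V_{d-2(N-1),n}$ realises every prescribed $1$-jet at a given point; thus $\mathrm{ev}$ is surjective.

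For the necessity direction $d\leq 2N-2$, I would exhibit a single configuration on which $\mathrm{ev}$ fails to be surjective by placing $p_1,\dots,p_N$ distinct on a line $\ell\subset\Pp n$. Composing $\mathrm{ev}$ with the projection at each $p_i$ onto the $2$-dimensional quotient $\calO_{\ell,p_i}/\mathfrak m_{p_i,\ell}^2$, which retains only the value and the derivative along $\ell$, gives a map that factors through the surjective restriction $V_{d,n}\twoheadrightarrow H^0(\ell,\calO_\ell(d))\cong V_{d,1}$ followed by the analogous evaluation map in $\Pp 1$. The latter cannot be surjective onto $\C^{2N}$ for $d<2N$, since a binary form of degree $d$ vanishing to order $\geq 2$ at $N$ points would require $2N>d$ zeros and hence must vanish. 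Therefore $\mathrm{ev}$ is not surjective at this configuration, and the fiber strictly exceeds the expected dimension. The main conceptual obstacle is identifying this collinear configuration as the worst case; once that is done, the reduction to the binary case makes the failure essentially tautological.
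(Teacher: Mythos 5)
Your proof is correct and takes a genuinely different route from the paper. The paper's argument is commutative-algebraic: it identifies the fibre of $\pi$ with the degree-$d$ piece of $I^2$, where $I$ is the ideal of the $N$ points, reduces the lemma to agreement of the Hilbert function and Hilbert polynomial of $\C[x_0,\dots,x_n]/I^2$ in degree $d$, and deduces this from Castelnuovo--Mumford regularity bounds --- $\reg(I^2)\leq 2\reg(I)$ from \cite{GGP} together with $\reg(I)\leq N$ for $N$ points --- via \cite[Thm.~4.2(2)]{syzygies} and the Auslander--Buchsbaum formula; sharpness is then checked by a codimension count for collinear points. You replace this machinery with a self-contained Hermite-interpolation construction: the multipliers $M_i=\prod_{j\neq i}L_j^2$ directly exhibit surjectivity of the $1$-jet evaluation map once $d-2(N-1)\geq 1$, which makes the threshold $d\geq 2N-1$ completely transparent, and the necessity direction reduces to an immediate dimension count for binary forms after restriction to a line. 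Both arguments single out the collinear configuration as extremal (in the paper this appears as the observation that $\reg(I)=N$ exactly when the points are collinear); the regularity viewpoint is not gratuitous, since it is what drives Remark~\ref{betterbounds} about sharpening the degree bound for configurations in general linear position, but your approach is an equally valid and more elementary alternative to the sufficiency half.
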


\proof
To prove the Lemma, we translate its statement into the language of commutative algebra. 
Let us fix $N$ distinct points $p_1,\dots,p_N$ in $\Pp n$ and denote by $I\subset \C[x_0,\dots,x_n]$ the ideal of the set $\{p_1,\dots,p_N\}$. Then the fibre of $\pi$ is exactly the degree $d$ part of the second power $I^2$ of the ideal $I$; its codimension in $V_{d,n}$ is (by definition) the value $P_{M}(d)$ of the Hilbert function of the module $M=\C[x_0,\dots,x_n]/I^2$. 
As the Hilbert polynomial of $M$ is exactly $n(N+1)$, what we want to prove is that the Hilbert function and the Hilbert polynomial of $M$ agree in degree $d$ for $d\geq 2N-1$. 

Let us observe that the depth of $M$ is at least $1$, as both $I$ and $I^2$ are saturated with respect to the irrelevant ideal of $\C[x_0,\dots,x_n]$. 
Hence, the projective dimension of $M$ is $n$ by the Auslander--Buchsbaum formula. By \cite[Thm. 4.2(2)]{syzygies}, this implies that the Hilbert function and the Hilbert polynomial of $M$ are equal if $d$ is larger than or equal to the Castelnuovo--Mumford regularity of $M$. 
Therefore, it suffices to show that the regularity of the module $M$ is at most $2N-1$, or, equivalently, that the regularity of the ideal $I^2$ is at most $2N$. But by \cite[Thm. 1.1]{GGP} (see also \cite{chandler}) we have $\reg(I^2)\leq 2\reg(I)$ and the regularity of the ideal $I$ of $N$ points of $\Pp n$ is at most $N$ (see for instance \cite[Thm. 4.1]{syzygies} and the discussion preceding it). Note that the regularity of $I$ is $N$ if and only if the points $p_1,\dots,p_N$ are collinear.

It remains to prove that the bound $d\geq 2N-1$ is sharp. This follows from an explicit calculation in the case where the points $p_1,\dots,p_N$ lie on the same line in $\Pp N$. In this case, the space of polynomials in $V_{d,n}$ singular at $p_1,\dots,p_N$ has codimension $N(n+1)$ for $d\geq 2N-1$ and codimension at most $N(n-1)+d+1\leq N(n+1)-1$ for $d\leq 2N-2$. 
\qed

To complete the proof of the Theorem, we will show the following:
\begin{lem}\label{main}
For all $N\geq 3$ and $d\geq 2N-1$, 
  the cohomology group $\coh k{X_{d,n}}$ vanishes  for $(n+1)^2<k< N$.
\end{lem}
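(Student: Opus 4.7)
The plan is to apply Vassiliev's method via Alexander duality. Inside the contractible vector space $V_{d,n}$ one has
\begin{equation*}
\tilde H^k(X_{d,n};\Q) \cong \bar H_{2D-k-1}(\Sigma_{d,n};\Q)(-D), \quad D=\binom{d+n}{n},
\end{equation*}
so the problem is reduced to showing $\bar H_m(\Sigma_{d,n};\Q)=0$ for $m$ in the window $[2D-N,\,2D-(n+1)^2-2]$. I would construct (via cubical hyperresolution, as indicated in the introduction) the first $N$ stages of a geometric resolution $\balX\to\Sigma_{d,n}$, filtered by $\cusub{\balX}{1}\subset\cdots\subset\cusub{\balX}{N}$, where the $p$-th layer parametrises triples $(f,\{p_1,\ldots,p_p\},t)$ with $p_1,\ldots,p_p\in\Sing(f)$ distinct and $t\in\mathring\Delta^{p-1}$ the interior of the $(p-1)$-simplex; the group $S_p$ acts diagonally with a twist by the sign of the simplex orientation. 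By Lemma~\ref{vectorbundle}, for $d\ge 2N-1$ and $p\le N$ the stratum $\cusub{\balX}{p}\setminus\cusub{\balX}{p-1}$ is an affine $\C^{D-p(n+1)}$-bundle over the quotient of $\mathring\Delta^{p-1}\times F(\Pp n,p)$ by $S_p$, whose total real Borel--Moore dimension is exactly $2D-p-1$.

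Feeding this filtration into the Vassiliev spectral sequence
\begin{equation*}
E^1_{p,q} = \bar H_{p+q}(\cusub{\balX}{p}\setminus\cusub{\balX}{p-1};\Q) \Longrightarrow \bar H_{p+q}(\balX;\Q),
\end{equation*}
the Thom isomorphism together with the Künneth contribution of the open simplex yields
\begin{equation*}
E^1_{p,q} \cong \bar H_{p+q-2(D-p(n+1))-(p-1)}(F(\Pp n,p);\pm\Q)^{S_p}.
\end{equation*}
Setting $p+q=m$, this is $\bar H_r(F(\Pp n,p);\pm\Q)^{S_p}$ with $r=m-2D+p(2n+1)+1$. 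As $m$ varies over the window, $r$ ranges over $[p(2n+1)-N+1,\,p(2n+1)-(n+1)^2-1]$. The vanishing of the lemma thus reduces to the following statement about sign-twisted configuration space homology: $\bar H_r(F(\Pp n,p);\pm\Q)^{S_p}=0$ whenever $r\le p(2n+1)-(n+1)^2-1$. Equivalently, by Poincaré duality on the smooth complex manifold $F(\Pp n,p)$ of real dimension $2pn$, the twisted cohomology $H^{r'}(F(\Pp n,p);\pm\Q)^{S_p}$ is concentrated in degrees $r'\le(n+1)^2-p$. The base case $p=1$ is immediate, since $F(\Pp n,1)=\Pp n$ has cohomology in degrees $\le 2n<(n+1)^2$.

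For general $p$ I would induct on $n$ using the affine stratification $\Pp n=\C^n\sqcup\Pp{n-1}$: stratifying $F(\Pp n,p)$ by how many of the $p$ points lie in each piece produces a sign-twisted Leray-type spectral sequence whose building blocks are configuration spaces of $\C^n$ and of $\Pp{n-1}$. The classical Arnold--Vassiliev computation shows that $\bar H_*(F(\C^n,p);\pm\Q)^{S_p}$ is concentrated near top Borel--Moore degree, and the inductive hypothesis controls the $\Pp{n-1}$ factors, giving the required concentration bound. Finally, the contribution of the tail $p\ge N$ (and of the strata where singular points come together in the cubical hyperresolution) is handled by the codimension estimate of Lemma~\ref{vectorbundle}: such configurations lie above polynomials singular in codimension $\ge N(n+1)$, whose total Borel--Moore contribution sits in degrees at most $2D-N-1$, strictly below the window.

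The main obstacle is the configuration-space vanishing statement above. Dimension alone handles $p=1$, and the $r<0$ estimate dispatches very small $p$, but the sharp threshold $(n+1)^2-p$ is delicate: the constant $(n+1)^2$ is exactly the top cohomological degree of $\GL_{n+1}(\C)$, and matching it requires carefully tracking how the Arnold--Vassiliev concentration interacts with the affine/projective stratification of $\Pp n$ at every inductive step.
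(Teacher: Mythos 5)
Your overall framework matches the paper's: Alexander duality reduces the problem to Borel--Moore homology of the discriminant, you build essentially the same cubical resolution, and you arrive at the same Vassiliev spectral sequence with $E^1_{p,q}$ expressed through the sign-isotypic Borel--Moore homology of configuration spaces of $\Pp n$. The accounting of the tail stratum $p=N$ via the $\cc dn - N$ dimension bound coming from Lemma~\ref{vectorbundle} is also the same argument as the paper's Lemma~\ref{laststratum}.

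The genuine gap is the configuration-space input, which you acknowledge as the ``main obstacle'' but do not close, and the sketch you give for it rests on a false premise. You claim that $\bar H_\pu(F(\C^n,p);\pm\Q)^{S_p}$ is ``concentrated near top Borel--Moore degree''; in fact this group \emph{vanishes identically} for $p\ge 2$ (already for $p=2$: $F(\C^n,2)\simeq S^{2n-1}$ and the generator $A_{12}\in H^{2n-1}$ is fixed, not negated, by the transposition). This vanishing, not a concentration estimate, is precisely what makes the affine stratification $\Pp n = \C^n\sqcup\Pp{n-1}$ collapse: only the strata with at most one point in $\C^n$ contribute, and the induction on $n$ then does give the right threshold. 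As stated, your sketch would put a nonzero class from the $\B p{\C^n}$ stratum in too low a degree whenever $p>(n+4)/2$, so the ``concentration'' heuristic alone cannot produce the bound $r'\le (n+1)^2-p$. What you are implicitly trying to reprove is exactly Vassiliev's Lemma~2 (Proposition~\ref{grass} in the paper), which identifies $\bar H_\pu^{\pm\Q}(\B l{\Pp n})$ with the homology of the Grassmannian $G(l,\C^{n+1})$ shifted by $l(l-1)$; the paper simply cites this, from which the needed vanishing for $l>n+1$ and the lower bound $l(l-1)$ on the degree of nonzero twisted Borel--Moore homology follow immediately (one checks $l(l-1)-\bigl(l(2n+1)-(n+1)^2\bigr)=(l-n-1)^2+1>0$). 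Without either citing that result or correctly proving the vanishing of $\bar H_\pu(\B p{\C^n};\pm\Q)$ for $p\ge 2$, the argument does not go through.
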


\begin{rem}\label{betterbounds}
Vassiliev proved that the behaviour of configurations with more than $k+1$ points contained in a $k$-dimensional linear subspace of $\Pp n$ does not play a role in his method (see e.g. \cite[Lemma~4 and~9]{Vart}). For this reason, one can replace the bound $2N-1$ with the minimal value $d_{0,N}$ of $d$ such that the restriction of \eqref{vbundle} to the locus where the  $p_i$ are in general linear position is a vector bundle of the desired rank. This allows to give a better bound on the degree $d$ in Lemma~\ref{main}.
\end{rem}
In the next two sections, we will prove Lemma~\ref{main}. Instead than studying the cohomology of $X_{d,n}$ directly, in the spirit of Vassiliev's method we will focus on its Alexander dual, the Borel--Moore homology of the discriminant $\Sigma_{d,n}$. To this end, we construct a cubical space $\calX$ based on a (simplified) resolution of the singularities of $\Sigma_{d,n}$, and show that its geometrical realization $\ba\calX\ba$, endowed with an appropriate topology, is proper homotopy equivalent to $\Sigma_{d,n}$. This is done in section~\ref{cubical}. In section~\ref{proof}, we define a stratification of $\ba\calX\ba$ by locally closed subsets and study the associated spectral sequence in Borel--Moore homology, usually called the \emph{Vassiliev spectral sequence}. We will use an explicit description of the strata to prove the vanishing of the $E^1$ terms of Vassiliev spectral sequence in a suitable range to prove Lemma~\ref{main}.

\section{Cubical resolutions}\label{cubical}

As usual with Vassiliev's method, the first step is to observe that knowing the cohomology of $X_{d,n}$ is equivalent to knowing the Borel--Moore homology of the discriminant $\Sigma=\Sigma_{d,n}$. This follows from Alexander duality:
\begin{equation}\label{alex}
\coh k {X_{d,n}}\cong\BM{2\ccc dn-k-1}{\Sigma_{d,n}}\otimes\Q\left(-\ccc dn \right),\ k>0,
\end{equation}
for $\ccc dn = \dim_\C V_{d,n}=\cc dn$.

To compute the Borel--Moore homology of $\Sigma$, we construct a cubical resolution of it. 
\begin{ntn}
For all $k\geq 0$, we denote by $\Sigma_{\geq k}$ the locus inside $\Sigma$ of polynomials whose singular locus contains at least $k$ distinct points.

We denote the symmetric group in $k$ letters by $\s_k$,
 the space of ordered configurations of $k$ points in $\Pp n$ by $\F k{\Pp n}=\{(p_1,\dots,p_k)\in(\Pp n)^k|\; p_i\neq p_j\text{ for }i\neq j\}$
and the corresponding space of unordered configurations by
$\B k{\Pp n}=\F k{\Pp n}/\s_k$.
\end{ntn}

It is easy to show that $\Sigma$ is singular in codimension $1$. In particular, polynomials $f$ defining hypersurfaces with cusps or with more than one singular point belong to the singular locus of $\Sigma$. 
However, there is a natural way to construct a resolution of singularities of $\Sigma$, i.e. a proper surjective map $\phi:\;\tilde\Sigma\rightarrow \Sigma$ from a non-singular quasi-projective variety $\tilde\Sigma$ to $\Sigma$ which restricts to an isomorphism $\tilde\Sigma\setminus\phi^{-1}(\Sing(\Sigma))\cong (\Sigma\setminus\Sing(\Sigma))$ on the complement of the singular locus. Namely, one can consider the following map:
$$
\cusub\calX1:=\tilde\Sigma:=\{(f,p)\in V_{d,n}\times\Pp n|\; p \in \Sing(f)\} \xrightarrow{\phi} \calX_\emptyset:=\Sigma.
$$

This resolution is a homeomorphism outside the set $\Sigma_{\geq2}$ and its preimage $\phi^{-1}(\Sigma_{\geq 2})$. 
Furthermore, to construct a resolution of singularities of $\phi^{-1}(\Sigma_{\geq 2})$ is not difficult: it suffices to consider the space of triples $(f,p_1,p_2)$ where the $p_i$ are prescribed singular points of $f$, with its natural forgetful map to $\cusub\calX1$. Of course, ordered pairs $(p_1,p_2)$ are equivalent to considering inclusions $x_1\subset x_2$ with $x_1=\{p_1\}\in\B1{\Pp n}$ and $x_2=\{p_1,p_2\}\in\B2{\Pp n}$.
Again, this resolution of singularities is a homeomorphism outside the preimage $\phi^{-1}(\Sigma_{\geq3})$ of $\Sigma_{\geq3}$ in $\phi^{-1}(\Sigma_{\geq2})$.

By iterating  $N-1$ times this construction of a resolution,  one obtains the following spaces:
\begin{dfn}
For $1\leq i_1\leq \dots\leq i_r\leq N-1$ and $I=\{i_1,\dots,i_r\}$ we set
$$
\calX_I=\left\{(f,x_1,\dots,x_r)\in V_{d,n}\times \prod_{1\leq j\leq r} \B{i_j}{\Pp n}|\; x_1\subset x_2\subset\dots\subset x_r\subset \Sing f\right\}$$
and
$\calX_{I\cup\{N\}}=\left\{(f,x_1,\dots,x_r)\in \calX_I|\; f\in\overline{\Sigma_{\geq N}}\right\}$,
where $\overline{\Sigma_{\geq N}}$ denotes the Zariski closure of ${\Sigma_{\geq N}}$ in $V_{d,n}$.
In particular, we have $\calX_\emptyset = \Sigma$ and $\cusub\calX N = \overline{\Sigma_{\geq N}}$.
\end{dfn}

For each inclusion $I\subset J\subset\{1,\dots, N\}$ there is a natural forgetful map $\phi_{IJ}:\;\calX_J\rightarrow\calX_I$. 
This can be rephrased by saying that $\calX$ is  a \emph{cubical space} over the set $\{1,\dots,N\}$. For general background on cubical spaces and their relationship with semisimplicial spaces we refer to \cite[\S5.1.1]{ps-book}.

\begin{rem} 
The construction of the cubical space $\calX$ is directly inspired by the construction of cubical resolutions for pairs of algebraic varieties in \cite[\S{}I.2]{hyperres} and \cite[\S5.2]{ps-book}. 
We remark that in these references, at each step of the construction of the resolution, one considers the maximal subset on which the resolution of singularities is an \emph{isomorphism} of quasi-projective varieties and resolves the singularities of its complement in the next step. This has the advantage that the locus one is resolving is automatically Zariski closed.
Instead, we consider the locus where the resolution of singularities is a \emph{homeomorphism}, so its complement --- such as for example $\Sigma_{\geq2}$ in the case of $\tilde\Sigma\rightarrow\Sigma$ --- is {a priori} just a constructible subset and in general not Zariski closed. 
In particular, our maps $\phi_{\emptyset,I}:\;\calX_I\rightarrow\calX_\emptyset=\Sigma$ are (in general) not proper.
Note that the construction in \cite[Ch. 5]{ps-book} extends also to the case in which in the next step one takes a resolution of singularities of the Zariski closure of the complement of the locus in which the previously constructed resolution is not a homeomorphism, like $\overline{\Sigma_{\geq2}}$ in our example \cite[Rmk.~5.18]{ps-book}.  
\end{rem}

\begin{dfn}
For all $I\subset \{1,\dots,N\}$, we denote by $\Delta_I$ the simplex 
$$\Delta_I = \left\{(\alpha:\;I\rightarrow [0,1])\left| \; \sum_{i\in I}\alpha(i)=1\right.\right\}.$$
For every inclusion $I\subset J\subset\{1,\dots, N\}$ we denote by $e_{IJ}:\;\Delta_I\rightarrow\Delta_J$ the inclusion obtained by extending each function $\alpha\in\Delta_I$ to take value $0$ on $J\setminus I$.
\end{dfn}

\begin{dfn}
The \emph{geometric realization} of the cubical space $\calX$ is the quotient space 
$$
\ba\calX\ba =\left. \left(\bigsqcup_{I\subset\{1,\dots,N\}}\calX_I\times\Delta_I\right)\right/\sim
$$
where $\sim$ is the equivalence relation generated by $(\phi_{IJ}(t),\alpha)\sim(t,e_{IJ}(\alpha))$ for all $\alpha\in\Delta_I$, $t\in\calX_J$ with $I\subset J\subset \{1,\dots,N\}$.
\end{dfn}
Note that $\Delta_\emptyset$ is empty, so that $\calX_\emptyset$ does not play a role in the construction of the geometrical realization of $\calX$.

At this point, we need to define a topology on $\ba\calX\ba$ that takes degenerations appropriately into account. This is made necessary from our choice of working with subsets which were not Zariski closed in the definition of the cubical space $\calX$. 
We start by constructing a partial compactification of the spaces $\cusub\calX k$ for $1\leq k\leq N-1$. By definition, the elements of $\cusub\calX k$ are pairs $(f,x)$ with $x\in\B k{\Pp n}$ and $f\in V_{d,n}$ a polynomial singular in $x$. As we assumed $d\geq 2N-1$, being singular at the points in $x$ imposes $k(n+1)$ independent conditions on $f$, so that $\cusub\calX k$ is a vector bundle of rank $\cc dn-k(n+1)$ over $\B k{\Pp n}$. Moreover, the assumption $d\geq 2N-1$ implies that the fibres of $\cusub\calX k\rightarrow\B k{\Pp n}$ are pairwise distinct linear subspaces of $V_{d,n}$. In other words, the vector bundle structure on $\cusub\calX k$ induces an injection of $\B k{\Pp n}$ into the Grassmannian $G\left(\cc dn-k(n+1),V_{d,n}\right)$ of linear subspaces of $V_{d,n}$ of codimension $k(n+1)$. 
\begin{ntn}
For $1\leq k\leq N-1$, we denote by $L_k$ the Zariski closure of the image of the map $\B k{\Pp n}\hookrightarrow G(\dim V_{d,n}-k(n+1),V_{d,n})$ induced by the vector bundle $\cusub\calX k\rightarrow \B k{\Pp n}$. For $1\leq k_1\leq k_2\leq N-1$ and $\lambda_i\in L_{k_i}$ we write $\lambda_1<\lambda_2$ whenever we have inclusions $W_{\lambda_2}\subset W_{\lambda_1}$ of the corresponding linear subspaces in $V_{d,n}$.
\end{ntn}
Let us remark that this definition of $<$ agrees with the inclusion of configurations on the open subsets $\B k{\Pp n}\subset L_k$.

\begin{ntn}
For each $\lambda\in L_k$ the set-theoretical intersection of the singular loci of all $f$ lying in the corresponding linear subspace $W_\lambda\subset V_{d,n}$ is a non-empty subset which by the assumption $d\geq 2N-1\geq 2k-1$ contains at most $k$ distinct points in $\Pp n$. We will call this element the \emph{support} of $\lambda$ and denote it by $s(\lambda)\in \B{n(\lambda)}{\Pp n}$, where $1\leq n(\lambda)\leq k$ denotes the number of distinct points in $s(\lambda)$. 
\end{ntn}

We are ready to define a partial compactification $\balX$ of the cubical space~$\calX$. 

\begin{dfn}
For $1\leq i_1\leq \dots\leq i_r\leq N-1$ and $I=\{i_1,\dots,i_r\}$ we set
$$
\balX_I=\left\{(f,\lambda_1,\dots,\lambda_r)\in \Sigma\times \prod_{1\leq j\leq r} L_{i_j}|\; \lambda_1< \lambda_2<\dots< \lambda_r, f\in W_{\lambda_r}\subset V_{d,n} \right\}$$
and
$\balX_{I\cup\{N\}}=\left\{(f,\lambda_1,\dots,\lambda_r)\in \balX_I|\; f\in\overline{\Sigma_{\geq N}}\right\}$.
\end{dfn}
As in the case of $\calX$, the forgetful maps ${\bar\phi}_{IJ}:\;\balX_J\rightarrow\balX_I$ for $I\subset J$ define a structure of cubical space on $\balX$ with the property that all maps $\bar\phi_{\emptyset,I}:\;\balX_I\rightarrow\balX_\emptyset=\Sigma$ are proper. 
We can use the concept of support of elements in $L_k$ to define a contraction map $\ba\balX\ba\rightarrow\ba\calX\ba$, as follows.
\begin{dfn}
Let us denote by $\rho:\;\bigsqcup_{I\subset\{1,\dots,N\}}\balX_I\times\Delta_I\rightarrow\ba\calX\ba$ the map defined by mapping $\left(f,\left(\lambda_i,\alpha_i\right)_{i\in I}\right)\in\balX_I\times\Delta_I$ 
to the equivalence class of $\left(f,\left(x_j,\beta_j\right)_{j\in J}\right)\in\calX_J\times\Delta_J$ with $J=\{n(\lambda_i)|\;i\in I\}$ and $x_j=s(\lambda_j)$, $\beta_j=\sum_{i\in I| n(\lambda_i)=j}\alpha_i$ for all $j\in J$.
\end{dfn}

It is easy to check that the map $\rho$ is compatible with the equivalence relation $\sim$ on $\bigsqcup_{I\subset\{1,\dots,N\}}\balX_I\times\Delta_I$ and that the induced map $\ba\balX\ba\rightarrow\ba\calX\ba$ is the identity when restricted to $\ba\calX\ba\subset\ba\balX\ba$. 
\begin{ntn}
We denote by $\bax$ the geometrical realization $\ba\calX\ba$ endowed with the topology induced by the topology on $\bigsqcup_{I\subset\{1,\dots,N\}}\balX_I\times\Delta_I$ under $\rho$. 
\end{ntn}

\begin{lem}
The augmentation $\bax\rightarrow\Sigma$ defined by the natural forgetful map extends to a homotopy equivalence of their one-point compactifications. In particular, it induces an isomorphism on Borel--Moore homology.
\end{lem}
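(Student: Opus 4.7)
The plan is to reduce the lemma to the general principle that a proper surjection with contractible fibers between sufficiently nice spaces is a proper homotopy equivalence, hence induces a homotopy equivalence of one-point compactifications and an isomorphism of Borel--Moore homology.

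First I would verify that the augmentation $a\colon\bax\to\Sigma$ is proper. By construction $\bax$ carries the quotient topology induced by $\rho$ from $\bigsqcup_I \balX_I\times\Delta_I$. Each simplex $\Delta_I$ is compact, and each $\bar\phi_{\emptyset,I}\colon\balX_I\to\Sigma$ is proper because $L_k$ was defined as the Zariski closure in a Grassmannian: consequently $\balX_I\subset \Sigma\times\prod_{j}L_{i_j}$ is a closed subvariety and projects properly to $\Sigma$. Putting these together, $\bigsqcup_I\balX_I\times\Delta_I\to\Sigma$ is proper, and properness descends to the quotient map $a$.

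Next I would describe the fiber $a^{-1}(f)$ for fixed $f\in\Sigma$. Since any $\lambda\in L_k$ with $f\in W_\lambda$ has support $s(\lambda)\subseteq\Sing(f)$, unwinding $\rho$ shows that $a^{-1}(f)$ is the geometric realization of the subcubical space whose cells are indexed by chains $x_{i_1}\subsetneq\cdots\subsetneq x_{i_r}$ of nonempty subsets of $\Sing(f)$ of cardinality $i_j\le N-1$, supplemented by one additional vertex labelled by the index $N$ exactly when $f\in\overline{\Sigma_{\geq N}}$. In the absence of truncation this is the barycentric subdivision of a full simplex on $\Sing(f)$, and when truncation does occur the extra vertex provides a cone point over a skeleton of a simplex; in either case the fiber is contractible.

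With $a$ proper and all fibers contractible, a standard Vietoris--Begle or Smale-type result yields that $a$ is a proper homotopy equivalence, which is equivalent to the claimed homotopy equivalence of one-point compactifications and entails an isomorphism on Borel--Moore homology. The main obstacle is that the fibers vary in combinatorial type across the natural stratification of $\Sigma$ by the configuration type of $\Sing(f)$ and by proximity to $\overline{\Sigma_{\geq N}}$, so a naive fiberwise contraction need not assemble continuously. The entire point of working with the partial compactification $\balX$ and the Grassmannian closures $L_k$ (rather than directly with $\ba\calX\ba$) is precisely to retain properness of the augmentation through these degenerations, whence the fiberwise contractibility verified above is enough to conclude.
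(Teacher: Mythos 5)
Your proposal follows essentially the same strategy as the paper: reduce to the statement that a proper map with contractible fibres between sufficiently nice spaces is a proper homotopy equivalence, then verify properness (via compactness of the simplices $\Delta_I$ and properness of $\bar\phi_{\emptyset,I}$, which you justify slightly more explicitly by observing that $\balX_I$ is closed in $\Sigma\times\prod_j L_{i_j}$ with $\prod_j L_{i_j}$ compact), and finally check fibrewise contractibility by identifying the fibre over $f$ with a subdivision of a simplex on $\Sing(f)$, possibly coned off by the vertex coming from the index $N$ when $f\in\overline{\Sigma_{\geq N}}$. This matches the paper's argument, where the fibre is shown (via barycentric subdivision) to be PL-isomorphic to the $(k-1)$-simplex when $f\notin\overline{\Sigma_{\geq N}}$ and to a topological cone otherwise. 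One minor imprecision: the Vietoris--Begle theorem by itself only gives a (co)homology isomorphism, not a proper homotopy equivalence; the paper instead cites Lacher's results on cell-like maps (together with a reference for the required topological hypotheses), which is the cleaner way to obtain the homotopy equivalence of one-point compactifications that you need.
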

\proof
To prove the claim, it suffices to prove that the augmentation is a proper map with contractible fibres. %
For sufficiently nice spaces, this is enough to ensure that the augmentation defines a proper homotopy equivalence. For instance, this follows from combining Theorem 1.1 and 1.2 from \cite{lacher}, see also the discussion in \cite[\S2.1]{WJR} for more details on the required topological conditions.

The properness of the augmentation follows from the definition of the topology on $\bax$ and the fact that the natural maps $\balX_I\times\Delta_I\rightarrow\Sigma$ are proper maps for all $I\subset\{1,\dots,N\}$, since they are the composition of the projection map to $\balX_I$ (with compact fibre $\Delta_I$) with the proper map $\bar\phi_{\emptyset,I}$. 
Next, we describe explicitly the fibres of $\bax\rightarrow\Sigma$ and check that they are contractible. If $f\in\Sigma\setminus\overline{\Sigma_{\geq N}}$ has exactly $k$ singular points $p_1,\dots,p_k$, then one can show that the fibre over $f$ can be contracted to the equivalence class of the point $(f,\{p_1,\dots,p_k\}),k\mapsto 1)\in\cusub\calX k\times\cusub\Delta k$. Using barycentric subdivisions one actually shows that the fibre over $f$ is piecewise linearly isomorphic to the $(k-1)$-dimensional simplex $\cusub\Delta{p_1,\dots,p_k}$, where the vertices are given by the equivalence classes of the $k$ points $(f,p_i,1\mapsto 1)\in\cusub\calX1\times\cusub\Delta1$. 
If $f$ belongs to $\overline{\Sigma_{\geq N}}$, then the fibre over $f$ is a topological cone with vertex in the point $(f,N\mapsto1)\in\cusub\calX N\times\cusub\Delta N$.
This follows from the fact that the maps $\phi_{I,I\cup\{N\}}:\;\calX_{I\cup\{N\}}\rightarrow\calX_I$ in the cubical structure are induced by the inclusion $\overline{\Sigma_{\geq N}}\hookrightarrow\Sigma$. 
For instance, in the case of $0\in\overline{\Sigma_{\geq N}}$, in which the singular locus coincides with $\Pp n$, the fibre over $0$ can be described as the topological cone over the topological $(N-1)$st self-join of $\Pp n$. 
\qed
\section{Proof of Lemma~\ref{main}}\label{proof}

We study the Borel--Moore homology of $\bax$ using the following stratification into locally closed subsets $F_1,\dots,F_N$.
\begin{dfn}
For $l=1,\dots,N$, we denote by $F_l$ the locally closed subset of $\bax$ defined by 
$$
F_l
=\rho\left(\bigsqcup_{\max I =l}\balX_I\times\Delta_I\right).
$$
\end{dfn}

By the definition of $\rho$, each subset $F_l$ coincides with the image under $\rho$ of the union of the $\calX_I\times\Delta_I$ with $\max I=l$ inside $\bigsqcup_{\max I =l}\balX_I\times\Delta_I$.

The Vassiliev spectral sequence is the spectral sequence $E^r_{p,q}\Rightarrow\BM{p+q}\Sigma$ in Borel--Moore homology associated with the filtration $F_\pu$. Its $E^1$ term is given by $E^1_{p,q}=\BM{p+q}{F_p}$.

For $l<N$, the description of $F_l$ given in \cite[Prop. 2.7]{OTM4} applies (see \cite[Thm. 3, Lemma 1]{gorinov} for a proof).
Hence, the space $F_l$ is a non-orientable simplicial bundle over $\cusub\calX{l}$, which in turn is a vector bundle of rank $\cc{d}{n}-l(n+1)$ over $\B l{\Pp n}$. 
The fibre of $F_l\rightarrow\cusub\calX{l}$ is isomorphic to the interior $\cusub\Delta {p_1,\dots,p_l}^\circ$ of a simplex of dimension $l-1$.
As $\cusub\Delta {p_1,\dots,p_l}^\circ$ is a contractible space of real dimension $l-1$, its only non-trivial Borel--Moore homology group is $\BM{l-1}{\cusub\Delta {p_1,\dots,p_l}^\circ}\cong \Q$. Under any loop based at a point $\{p_1,\dots,p_l\}\in\B l{\Pp n}$, the orientation of $\BM{l-1}{\cusub\Delta {p_1,\dots,p_l}^\circ}$ changes according to the sign representation of the symmetric group $\s_l$ permuting $p_1,\dots,p_l$. %

Therefore, the Borel--Moore homology of the stratum $F_l$ is given by 
$$
\BM\pu{F_l} %
=\BM[\pm \Q]{\pu-2\ccc dn+2ln+l+1}{\B l{\Pp n}}\otimes\Q(-\ccc dn+l(n+1))
$$
for $\ccc dn=\cc dn$, where  $\pm\Q$  denotes the rank $1$ local system on $\B l{\Pp n}$ induced by the sign representation of $\s_l$, so that $\BM[\pm \Q]\pu{\B l{\Pp n}}$ is the $\s_l$-alternating part of the Borel--Moore homology of $\F l{\Pp n}$. We will refer to Borel--Moore homology with $\pm\Q$-coefficients as \emph{twisted} Borel--Moore homology. 

\begin{prop}[{\cite[Lemma~2]{Vart}}]\label{grass}
The twisted Borel--Moore homology of $\B l{\Pp n}$ is given by
$$\BM [\pm\Q]\pu{\B l{\Pp n}}= H_{\pu-l(l-1)}(G(l,\C^{n+1});\Q)\otimes\Q(l(l-1)/2),$$
where $G(l,\C^{n+1})$ denotes the Grassmannian of $l$-dimensional linear subspaces of $\C^{n+1}$.
\end{prop}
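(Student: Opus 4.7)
My strategy is to relate the twisted Borel--Moore homology of $\B l{\Pp n}$ to the ordinary homology of the Grassmannian $G(l,\C^{n+1})$ via a fiber bundle structure on the open subspace of configurations in general linear position. Note first that $\BM[\pm\Q]\pu{\B l{\Pp n}}$ coincides with the sign-isotypic component of $\BM\pu{\F l{\Pp n}}$ under the natural action of $\s_l$ by permutation of points. I would stratify $\F l{\Pp n}$ by the projective dimension of the linear span of the $l$ points and write $F^{\mathrm{gen}}\subseteq\F l{\Pp n}$ for the open stratum in which this dimension is maximal (equal to $l-1$). On each lower stratum the sign-isotypic part of the Borel--Moore homology vanishes: a configuration with span of projective dimension $k<l-1$ carries additional combinatorial data given by the matroid of linear dependencies among the points, and a finer sub-stratification by matroid type exhibits in each case a non-trivial stabilizer in $\s_l$ containing transpositions that preserve all the data, which forces cancellation of the alternating part.

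Once the computation is reduced to $F^{\mathrm{gen}}$, I would consider the map
\[
\pi\colon F^{\mathrm{gen}}\longrightarrow G(l,\C^{n+1}),\qquad (p_1,\dots,p_l)\mapsto \mathrm{span}(p_1,\dots,p_l),
\]
which is a fiber bundle obtained from the principal $\GL_l(\C)$-bundle $V^l\to G(l,\C^{n+1})$ of ordered $l$-frames in $\C^{n+1}$ by quotienting by the diagonal torus $T\cong(\C^*)^l\subset\GL_l(\C)$ which scales each frame vector individually. The fiber is $\GL_l(\C)/T$, a smooth variety of complex dimension $l(l-1)$, whose cohomology ring is isomorphic by Borel's theorem to the coinvariant algebra $\Q[x_1,\dots,x_l]/(\text{positive-degree }\s_l\text{-invariants})$. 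As a graded $\s_l$-representation this is the regular representation, with the Weyl group action matching the permutation of the $p_i$. The sign-isotypic summand is one-dimensional, spanned by the Vandermonde in cohomological degree $l(l-1)$ with Hodge structure $\Q(-l(l-1)/2)$. Poincar\'e duality for Borel--Moore homology ($\bar{H}_j=H^{2d-j}(d)$ with $d=l(l-1)$) then yields
\[
\BM{l(l-1)}{\GL_l(\C)/T}^{\mathrm{sign}}\cong \Q(l(l-1)/2),
\]
concentrated in this single degree.

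To conclude, since $\GL_l(\C)$ is connected its action on $\coh\pu{\GL_l(\C)/T}$ is trivial, so $\pi$ has no monodromy on fiber cohomology. The Leray spectral sequence for $\pi$ therefore degenerates on the sign-isotypic piece and, combined with the reduction of the first step and the fiber computation above, yields
\[
\BM[\pm\Q]\pu{\B l{\Pp n}}\cong H_{\pu-l(l-1)}(G(l,\C^{n+1});\Q)\otimes \Q(l(l-1)/2),
\]
which is the desired formula. The delicate point will be the first step: showing that linearly special configurations contribute nothing to the sign-isotypic Borel--Moore homology, which requires a careful matroid-theoretic analysis of the degeneration loci. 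Once this vanishing is secured, the rest is a routine Leray computation together with the classical identification of $\coh\pu{\GL_l/T}$ with the coinvariant algebra.
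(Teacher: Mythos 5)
The paper does not give its own proof of this proposition: it is quoted verbatim from Vassiliev \cite[Lemma~2]{Vart}, so I can only assess your argument against what is known about that proof and on its own merits.

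Your computation for the generic stratum is correct: the span map to $G(l,\C^{n+1})$ is a fibre bundle with fibre $\GL_l(\C)/T$, the $\s_l$-action matches the Weyl group action, the cohomology is the coinvariant algebra, the sign appears once (on the Vandermonde) in cohomological degree $l(l-1)$, and the Leray spectral sequence degenerates since base and fibre have cohomology only in even degree. The gap is in the claimed vanishing on the degenerate strata. Your mechanism — ``a non-trivial stabilizer in $\s_l$ containing transpositions'' on each matroid stratum — is not available: the points of $\F l{\Pp n}$ are pairwise \emph{distinct}, so even a linearly degenerate ordered configuration generically has trivial $\s_l$-stabilizer (for instance, four collinear points with generic cross-ratio, matroid $U_{2,4}$, have trivial stabilizer even though the matroid has automorphism group all of $\s_4$). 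A large matroid automorphism group does not produce any permutation fixing an actual configuration, and the vanishing you need simply does not follow from it. Trying instead to fibre each span-stratum over a smaller Grassmannian and appeal to the proposition for smaller $\Pp^k$ is circular, because only the \emph{whole} of $\F l{\Pp^k}$ is controlled by the inductive statement, not its open spanning stratum.

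The stabilizer argument does apply, and the proof goes through cleanly, if one works with the closed embedding $\F l{\Pp n}\hookrightarrow(\Pp n)^l$ rather than stratifying by span (this is, to my knowledge, essentially Vassiliev's route). Let $\Delta\subset(\Pp n)^l$ be the fat diagonal. Every point of $\Delta$ has two equal coordinates, hence its $\s_l$-stabilizer \emph{does} contain a transposition; since the sign character is non-trivial on that stabilizer, the sign-isotypic part of the stalk of the pushforward to $\Delta/\s_l$ vanishes everywhere, so $\BM\pu{\Delta}^{\mathrm{sign}}=0$. The $\s_l$-equivariant long exact sequence of the pair then gives an isomorphism
$\BM[\pm\Q]\pu{\B l{\Pp n}}=\BM\pu{\F l{\Pp n}}^{\mathrm{sign}}\cong\BM\pu{(\Pp n)^l}^{\mathrm{sign}}$, and the latter is $\bigwedge^l H^{2\ell n-\pu}(\Pp n)$ (no Koszul signs since $H^\pu(\Pp n)$ sits in even degree), which after reindexing is exactly $H_{\pu-l(l-1)}(G(l,\C^{n+1}))\otimes\Q(l(l-1)/2)$. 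I would recommend replacing your first step with this comparison to $(\Pp n)^l$; your generic-stratum analysis then becomes an instructive consistency check rather than part of the proof.
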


Let us remark that, although Vassiliev did not consider Hodge structures in \cite{Vart}, it follows from his proof of Proposition~\ref{grass} that the Hodge structures on $\BM [\pm\Q]k{\B l{\Pp n}}$ are pure of Tate type and weight $-k$. 

As $G(l,\C^{n+1})$ is empty for $l> n+1$, this means that $E^1_{p,q}$ vanishes for $n+1<p<N$, i.e. only the strata $F_1,\dots,F_{n+1}$ contribute to the Borel--Moore homology of $F_1\cup\dots\cup F_{N-1}=\bax\setminus F_N$. 

The description of the open stratum $F_N$ is more complicated, nevertheless the following lemma implies that its Borel--Moore homology does not contribute to the stable cohomology of $X_{d,n}$. The proof essentially consists in proving $E^1_{N,q}=0$ for the range $q\geq 2\left(\cc dn-N\right)$ in Vassiliev's spectral sequence.

\begin{lem}\label{laststratum}
\begin{align*}
\BM k\Sigma&\cong\BM k{\bax\setminus F_{N}} & \forall k\geq 2\cc dn -N+1.
\end{align*}
\end{lem}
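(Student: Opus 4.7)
The plan is to deduce the claim from a vanishing statement for $\BM{j}{F_N}$ in the relevant range. Indeed, the open--closed decomposition $\bax = F_N \sqcup (\bax \setminus F_N)$ (with $F_N$ open and its complement closed) gives the Borel--Moore localization long exact sequence
$$
\cdots \to \BM{k+1}{F_N} \to \BM{k}{\bax \setminus F_N} \to \BM{k}{\bax} \to \BM{k}{F_N} \to \cdots,
$$
and the preceding lemma identifies $\BM\pu{\bax}$ with $\BM\pu{\Sigma}$. Hence it suffices to prove that $\BM{j}{F_N} = 0$ for all $j \geq 2\cc dn - N + 1$, since then both of the outer terms in the portion $\BM{k+1}{F_N} \to \BM{k}{\bax \setminus F_N} \to \BM{k}{\bax} \to \BM{k}{F_N}$ of the sequence vanish for $k \geq 2\cc dn - N + 1$.

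To establish this vanishing, I would bound the real topological dimension of $F_N$ and invoke the fact that $\BM{j}{Y} = 0$ for $j > \dim_\R Y$. By definition, $F_N = \rho\bigl(\bigsqcup_{\max I = N} \balX_I \times \Delta_I\bigr)$ is a continuous image, so $\dim_\R F_N \leq \max_{\max I = N}(2\dim_\C \balX_I + |I| - 1)$. For $I = \{i_1 < \cdots < i_r = N\}$, the space $\balX_I$ carries a natural vector bundle structure of rank $\cc dn - N(n+1)$ over the incidence variety $\Lambda_I = \{(\lambda_1,\dots,\lambda_r) \in L_{i_1} \times \cdots \times L_{i_r} : \lambda_1 < \cdots < \lambda_r\}$, with fiber $W_{\lambda_r}$ over $(\lambda_1,\dots,\lambda_r)$. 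The projection $\Lambda_I \to L_N$, $(\lambda_1,\dots,\lambda_r) \mapsto \lambda_r$, is generically finite: over the open stratum $\B N{\Pp n} \subset L_N$, any compatible flag $\lambda_1 < \cdots < \lambda_{r-1}$ must correspond, by Lemma~\ref{vectorbundle} together with $d \geq 2N - 1$, to a chain of sub-configurations of the $N$ distinct points supporting $\lambda_r$, which is a finite collection. Consequently $\dim_\C \Lambda_I = \dim_\C L_N = nN$, and
$$
\dim_\C \balX_I = nN + \cc dn - N(n+1) = \cc dn - N.
$$
Since $|I| \leq N$, we conclude $\dim_\R F_N \leq 2(\cc dn - N) + (N - 1) = 2\cc dn - N - 1$, so $\BM{j}{F_N} = 0$ for $j \geq 2\cc dn - N$, which is slightly stronger than required.

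The \textbf{main obstacle} is the dimension estimate of $\Lambda_I$, specifically controlling the projection $\Lambda_I \to L_N$ over the boundary $L_N \setminus \B N{\Pp n}$. A priori, the fibers of this projection may jump up in dimension at boundary points where the support of $\lambda_r$ consists of fewer than $N$ points, potentially producing extra irreducible components of $\Lambda_I$ of dimension exceeding the generic value $nN$. Ruling this out requires showing that $\Lambda_I$ is the Zariski closure of its open stratum inside $L_{i_1} \times \cdots \times L_{i_r}$, which forces its dimension back to the generic one. This relies on the definition of each $L_k$ as the Zariski closure of $\B k{\Pp n}$ together with a careful analysis of the behaviour of the incidence conditions $\lambda_j < \lambda_{j+1}$ under degeneration.
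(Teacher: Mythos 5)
Your overall strategy matches the paper's: deduce the isomorphism from the long exact sequence of the pair $(\bax, \bax\setminus F_N)$, reducing the claim to the vanishing of $\BM{j}{F_N}$ for $j \geq 2\cc dn - N$, and get this vanishing from a bound $\dim_\R F_N \leq 2\cc dn - N - 1$. That framing is correct, and the final numerology (bounding $\dim_\C$ of the parameter space by $\cc dn - N$, adding at most $N-1$ for the simplex direction) also matches the paper.

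Where your argument diverges — and where it breaks — is in the intermediate dimension estimate. You treat $\balX_I$ for $\max I = N$ as a vector bundle of rank $\cc dn - N(n+1)$ over an incidence variety $\Lambda_I \subset L_{i_1}\times\cdots\times L_{i_r}$ with $i_r = N$, but the paper never defines $L_N$; the Grassmannian closures $L_k$ are introduced only for $1 \leq k \leq N-1$. The set $\balX_{J\cup\{N\}}$ is not obtained by adjoining a flag component in a Grassmannian of codimension-$N(n+1)$ subspaces; it is cut out of $\balX_J$ by the \emph{non-linear} condition $f \in \overline{\Sigma_{\geq N}}$. So the vector bundle structure of the stated rank does not exist as you describe it, and the identity $\dim_\C \balX_I = nN + \cc dn - N(n+1)$ does not follow from a vector-bundle rank computation. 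Beyond this misreading, you yourself flag the real difficulty: even if one were to introduce $L_N$ and an incidence variety $\Lambda_I$, controlling the dimension of $\Lambda_I$ over the boundary of $L_N$ — ruling out extra components where the support drops below $N$ points — is non-trivial and you leave it unresolved. That is a genuine gap, not a detail.

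The paper sidesteps this entirely by further stratifying $F_N$ into pieces $\Phi_0,\dots,\Phi_{N-1}$ indexed by the size of the terminal configuration; each $\Phi_l$ is an open simplicial bundle (fibre the interior of an $l$-simplex) over $\cusub\calX{l,N}$, which is defined over the honest configuration space $\B{l}{\Pp n}$ rather than over a Grassmannian closure. The dimension of $\cusub\calX{l,N}$ is then bounded by exhibiting a surjection from the rank-$\left(\cc dn - N(n+1)\right)$ vector bundle over $\F{N}{\Pp n}$, a space of complex dimension $\cc dn - N$, via $(f,p_1,\dots,p_N)\mapsto(f,\{p_1,\dots,p_l\})$. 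This replaces your Zariski-closure analysis of $\Lambda_I$ with a concrete dominant map whose source has known dimension, and it is what makes the dimension count go through. If you want to salvage your route, you would need to either carry out the closure analysis you flag as the obstacle, or switch to the paper's stratification and surjection argument.
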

\begin{proof}%

To prove the claim, we stratify $F_N$ as the union of the following locally closed substrata:
$$
\Phi_0:=\rho\left(\cusub\calX N\times\cusub\Delta N\right),
\ \ \ 
\Phi_l:=\rho\left(\bigsqcup_{\max J=l} \calX_{J\cup\{N\}}\times\Delta_{J\cup\{N\}}\right)\text{ for }1\leq l\leq N-1.
$$

By definition, $F_N$ and its substrata are only determined by the geometry of the inclusions $x_1\subsetneq x_2\subsetneq\dots\subsetneq x_r$ of subsets of the singular loci of elements $f\in\Sigma_{\geq N}$, where the $x_i$ consist of at most $N-1$ distinct points. For a substratum $\Phi_l$, the chain of inclusions should terminate with $x_r\in\B l{\Pp n}$. 
Keeping this in mind, it is easy to generalize \cite[Prop. 2.7]{OTM4} to show that the natural map 
$$
\Phi_l\longrightarrow \cusub\calX{l,N}
$$
is a locally trivial fibration whose fibre is the interior of an $l$-dimensional simplex. Intuitively, for $l\geq 1$ the fibre at $(f,x)\in\cusub\calX{l,N}\subset\cusub\calX l$ can be thought of as a cone over the fibre of $F_l\longrightarrow\cusub\calX l$. 

As we are dealing with polynomials $f$ with at least $N$ singular points, we have that $(f,p_1,\dots,p_N)\mapsto(f,\{p_1,\dots,p_l\})$ defines a surjection 
$$\left\{(f,p_1,\dots p_N)\in V_{d,n}\times\F N{\Pp n}|\;
p_1,\dots,p_N\in\Sing(f)\right\}
\longrightarrow \cusub\calX{l,N},
$$
where the domain is a vector bundle of rank $\cc dn-N(n+1)$ over $\F N{\Pp n}$ by the assumption $d\geq 2N-1$. 
As a consequence, the complex dimension of $\cusub\calX{l,N}$ is at most $\cc dn-N$ and the real dimension of each stratum $\Phi_l$ (in the sense of the maximal dimension of a cell in a cell decomposition of $\Phi_l$) is at most $2\cc dn - 2N+l$. In particular, the largest-dimensional stratum $\Phi_{N-1}$ has real dimension smaller than $2\cc dn -N-1$, which implies that the Borel--Moore homology of $F_N$  vanishes in degree larger than or equal to $2\cc dn-N$. 
Then the claim follows from the long exact sequence
$$
\dots\rightarrow\BM{\pu+1}{F_N}
\rightarrow\BM\pu{\bax\setminus F_N}
\rightarrow\BM\pu{\bax}
\rightarrow\BM\pu{F_N}\rightarrow\dots
$$
induced by the closed inclusion $\bax\setminus F_N\hookrightarrow \ba \calX\ba$.
\end{proof}

In view of the above lemma, we can concentrate on the first $n+1$ strata. 
As the Grassmannian $G(l,\C^{n+1})$ has complex dimension $l(n+1-l)$, 
in view of Proposition~\ref{grass}
the twisted Borel--Moore homology of $\B l{\Pp n}$ is non-trivial only between degree $l(l-1)$ and degree $2ln-l(l-1)$.
Hence, the Borel--Moore homology of $F_l$ can be non-trivial only between 
degree $l(l-1)+2\cc dn-2ln+1-l-1=2\cc dn-l(2n+2-l)-1$
and degree $2ln-l(l-1)+2\cc dn-2ln-l-1=2\cc dn-l^2-1$.

In particular, the minimal degree in which a stratum $F_l$ with $l\leq n+1$ has non-trivial Borel--Moore homology is degree 
$2\cc dn-(n+1)^2-1$, which is attained exactly for $l=n+1$.
Hence, this is the minimal degree for which the Borel--Moore homology of  $\bax\setminus F_N$ can be non-trivial. 
By Lemma \ref{laststratum}, this implies the vanishing of the $k$th Borel--Moore homology group of $\Sigma$ 
for $2\cc dn -N\leq k<2\cc dn-(n+1)^2-1$.
Then the claim of Lemma \ref{main} follows from Alexander duality \eqref{alex}.%

As a further check, one can consider Hodge structures and check that the Hodge weight of $\coh{(n+1)^2}{X_{d,n}}$ agrees with the Hodge weight $(n+1)(n+2)$ of $\eta_0\dots\eta_n$. 
One has $G(n+1,\C^{n+1})=\{\text{pt}\}$, so that the only non-trivial twisted Borel--Moore homology group of $\B {n+1}{\Pp n}$ is $$\BM[\pm\Q]{n(n+1)}{\B{n+1}{\Pp n}} =\Q(n(n+1)/2).$$ From this one obtains 
$$\BM{2\ccc dn-(n+1)(n+2)+n}{F_{n+1}}=\Q(\ccc dn-(n+1)(n+2)/2),\ \ccc dn=\cc dn$$
and  
$$\coh{(n+1)^2}{X_{d,n}}=\Q(-(n+1)(n+2)/2)
$$
after applying Alexander duality \eqref{alex}.

\def\cprime{$'$}

\end{document}